\newcommand{\sysn}{\left\{\begin{array}{rcl}}
\newcommand{\sysk}{\end{array}\right.}
\newtheorem{theorem}{Theorem}[section]
\newtheorem{lemma}[theorem]{Lemma}
\theoremstyle{example}
\newtheorem{proposition}[theorem]{Proposition}
\theoremstyle{definition}
\newtheorem{definition}[theorem]{Definition}
\newtheorem{corollary}[theorem]{Corollary}
\journal{...}
\begin{document}

\title{On Baire property, compactness and completeness properties of spaces of Baire functions}

\author{Alexander V. Osipov}

\address{Krasovskii Institute of Mathematics and Mechanics, \\ Ural Federal
 University, Yekaterinburg, Russia}

\ead{OAB@list.ru}

\begin{abstract} In this paper,  we have obtained  a characterization
topological spaces $X$ for which the function space
$B_{\alpha}(X,Y)$ of Baire functions of class $\alpha$  is Baire for every $1\leq\alpha\leq \omega_1$ and any
Fr\'{e}chet space $Y$. In particular, we proved that
$B_{\alpha}(X,\mathbb{R})$ is Baire if and only if
$B_{\alpha}(X,Y)$ is Baire  for any Banach space $Y$.

Also we proved that many completeness and compactness properties
coincide in spaces $B_{\alpha}(X,Y)$ for any Fr\'{e}chet space $Y$.
\end{abstract}

\begin{keyword}  Baire property \sep function space \sep Baire function \sep almost open map \sep Oxtoby
complete \sep pseudocomplete \sep Fr\'{e}chet space

\MSC[2010] 54C35 \sep 54E52 \sep 46A03 \sep 22A05

\end{keyword}

\maketitle 
\section{Introduction}

For topological spaces $X$ and $Y$, we denote by $C_p(X,Y)$ the
set $C(X,Y)$ of all continuous functions from $X$ to $Y$ endowed
with the topology of pointwise convergence. A function
$f:X\rightarrow Y$ from a topological space $X$ to a topological
space $Y$ is a {\it Baire-one function} (or a {\it function of the
first Baire class}), if $f$ is a pointwise limit of a sequence of
 $\{f_n: n\in \mathbb{N}\}\subset C(X,Y)$. Let $B_1(X,Y)$ denote the family of all
Baire-one functions on a topological space $X$ to a topological
space $Y$ endowed with the topology of pointwise convergence. For
every countable ordinal $\alpha$, let $B_{\alpha}(X,Y)$ be a set
of all functions $f:X\rightarrow Y$ that are pointwise limits of
sequences $\{f_n: n\in \mathbb{N}\}\subset C_p(X,Y)\cup
\bigcup\limits_{\beta<\alpha} B_{\beta}(X,Y)$. For
$\alpha=\omega_1$, let
$B_{\omega_1}(X,Y)=\bigcup\limits_{\beta<\omega_1} B_{\beta}(X,Y)$
be a space of all Baire functions on a topological space $X$ to a
topological space $Y$ endowed with the topology of pointwise
convergence.

Throughout this paper,  {\it all spaces are assumed to be
Tychonoff}. This is due to the fact that for any topological space
$X$ there exists the Tychonoff space $\tau X$ and the onto map
$\tau_X: X\rightarrow \tau X$ such that for any map $f$ of $X$ to
a Tychonoff space $Y$ there exists a map $g: \tau X\rightarrow Y$
such that $f=g\circ \tau_X$. Evidently, for any map $f:
X\rightarrow Y$ the unique map $\tau f: \tau X\rightarrow \tau Y$
is defined such that $\tau f \circ \tau_X=\tau_Y\circ f$. The
correspondence $X\rightarrow \tau X$ for any space $X$ and
$f\rightarrow \tau f$ for any map $f$ is called the Tychonoff
functor \protect\cite{Is,Ok}.

Note that for any Tychonoff space $X$ and a path-connected
topological space $Y$ the space $B_{\alpha}(X,Y)$ is dense in the
Tychonoff product $Y^X$ (see Proposition 2.4 in \cite{BG} for
$C_p(X,Y)$).

\section{Baire property of $B_{\alpha}(X,Y)$}

 A space is {\it meager} (or {\it of the first Baire category}) if it
can be written as a countable union of closed sets with empty
interior. A topological space $X$ is {\it Baire} if the Baire
Category Theorem holds for $X$, i.e., the intersection of any
sequence of open dense subsets of $X$ is dense in $X$. Clearly, if
$X$ is Baire, then $X$ is not meager. The reverse implication is
in general not true. However, it holds for every homogeneous space
$X$ (see Theorem 2.3 in \protect\cite{LM}). Since the space
$B_{\alpha}(X,Y)$ is a topological group ( and, hence, is a
homogeneous space)  for any topological group $Y$,
$B_{\alpha}(X,Y)$ is Baire if and only if it is not meager.

 Being a Baire space is an
important topological property for a space and it is therefore
natural to ask when function spaces are Baire. In general, it is
not an easy task to characterize when a function space has the
Baire property. The problem for $C_p(X,\mathbb{R})$ was solved
independently by Pytkeev \protect\cite{pyt1}, Tkachuk
\protect\cite{tk} and van Douwen \protect\cite{vD}.

One of the interesting problems for the space of Baire functions
is the Banakh-Gabriyelyan problem (Problem 1.1 in
\protect\cite{BG}): {\it Let $\alpha$ be a countable ordinal.
Characterize topological spaces $X$ and $Y$ for which the function
space $B_{\alpha}(X,Y)$ is Baire}.

\medskip

Recall that a topological vector space $X$ is a {\it Fr\'{e}chet
space} if $X$ is a locally convex complete metrizable space.

In \cite{ospyt1} (Theorem 3.15), it is proved that
$C_p(X,\mathbb{R})$ is Baire if and only if $C_p(X,Y)$ is Baire
for any Fr\'{e}chet spaces $Y$.

In \cite{BG} (see Corollary 4.2), it is proved that for every
countable ordinal $\alpha\geq 2$, the function space
$B_{\alpha}(X,\mathbb{R})$ is Baire.

In \cite{Os}, we have obtained a characterization for topological
space $X$ for which the function space $B_1(X,\mathbb{R})$ is
Baire.

In this paper, we get answer to the Banakh-Gabriyelyan problem for
any topological space $X$, every $0\leq\alpha\leq \omega_1$ and
for any Fr\'{e}chet space $Y$.

\medskip

Recall that a mapping $\varphi: K\rightarrow M$ is called {\it
almost open} if $Int \varphi(V)\neq \emptyset$ for any non-empty
open subset $V$ of $K$. Note that irreducible mappings defined on
compact spaces are almost open. Also note that if
$\varphi_{\alpha}:K_{\alpha}\rightarrow M_{\alpha}$ ($\alpha\in
A$) are surjective almost open mappings then the product mapping
$\prod\limits_{\alpha\in A}\varphi_{\alpha}:
\prod\limits_{\alpha\in A} K_{\alpha}\rightarrow
\prod\limits_{\alpha\in A} M_{\alpha}$ is also almost open.

\begin{lemma}(Lemma 3.15 in \cite{ospyt})\label{lem5} Let $\psi:P\rightarrow L$ be a surjective continuous almost open
mapping and let $E\subseteq P$ be a dense non-meager (Baire)
subspace in $P$. Then $\psi(E)$ is non-meager (Baire).
\end{lemma}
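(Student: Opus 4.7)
The plan is to handle both conclusions (non-meager and Baire) simultaneously by isolating one sublemma: \emph{if $A\subseteq \psi(E)$ is nowhere dense in $\psi(E)$, then $\psi^{-1}(\mathrm{cl}_L A)\cap E$ is nowhere dense in $E$.} A preliminary observation I would use freely is that $\psi(E)$ is dense in $L$: since $\psi$ is surjective, any nonempty open $U\subseteq L$ has nonempty preimage $\psi^{-1}(U)$ in $P$, which meets the dense set $E$, so $U\cap \psi(E)\neq\emptyset$.

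To prove the sublemma, I would suppose for contradiction that some nonempty open $W\subseteq P$ satisfies $W\cap E\subseteq \psi^{-1}(\mathrm{cl}_L A)$. Because $E$ is dense in $P$, the set $W\cap E$ is dense in $W$, so continuity of $\psi$ gives $\psi(W)\subseteq \mathrm{cl}_L(\psi(W\cap E))\subseteq \mathrm{cl}_L A$. Almost openness of $\psi$ then produces a nonempty open set $V:=\mathrm{Int}_L\,\psi(W)\subseteq \mathrm{cl}_L A$. Intersecting with the dense set $\psi(E)$ yields a nonempty open subset $V\cap \psi(E)$ of $\psi(E)$ contained in $\mathrm{cl}_L A\cap \psi(E)=\mathrm{cl}_{\psi(E)}A$, contradicting nowhere density of $A$ in $\psi(E)$.

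With the sublemma in hand both conclusions follow quickly. If $\psi(E)=\bigcup_n A_n$ with each $A_n$ nowhere dense in $\psi(E)$, then $E=\bigcup_n(\psi^{-1}(\mathrm{cl}_L A_n)\cap E)$ would present $E$ as meager in itself, contradicting the non-meager hypothesis. For the Baire assertion, were $\psi(E)$ not Baire, some nonempty open $V=V'\cap \psi(E)$ (with $V'$ open in $L$) would be meager in $\psi(E)$, say $V=\bigcup_n F_n$ with each $F_n$ nowhere dense in $\psi(E)$; then $\psi^{-1}(V')\cap E$ is a nonempty open, hence Baire, subspace of $E$, but the sublemma writes it as $\bigcup_n(\psi^{-1}(\mathrm{cl}_L F_n)\cap \psi^{-1}(V')\cap E)$, a countable union of sets nowhere dense in $\psi^{-1}(V')\cap E$, forcing it to be meager in itself---a contradiction.

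The only substantive difficulty lies in the sublemma, where one must combine in the correct order density of $E$ in $P$ (to extend $W\cap E$ to $W$), continuity of $\psi$ (to push forward into $\mathrm{cl}_L A$), and almost openness (to extract a genuine open piece of $L$ from $\psi(W)$). Everything else is routine manipulation of closures and open sets.
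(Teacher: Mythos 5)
The paper does not prove this lemma at all --- it is imported verbatim as Lemma 3.15 of \cite{ospyt} --- so there is no in-paper argument to compare yours against. Judged on its own, your proof is correct and complete. The sublemma is the right pivot and is argued soundly: since $\psi^{-1}(\mathrm{cl}_L A)$ is closed in $P$, failing to be nowhere dense in $E$ does yield an open $W\subseteq P$ with $W\cap E\subseteq \psi^{-1}(\mathrm{cl}_L A)$; density of $E$, continuity, and almost openness are then applied in the correct order to produce a nonempty open subset of $\psi(E)$ inside $\mathrm{cl}_{\psi(E)}A$. The deductions of both the non-meager and the Baire conclusions from the sublemma (using that open subspaces of a Baire space are non-meager, and that a set nowhere dense in $E$ is nowhere dense in any open subspace of $E$ it meets) are also correct. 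This is the standard route for such preimage/pushforward Baire-category lemmas, so I would expect it to coincide in substance with the argument in \cite{ospyt}.
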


\begin{lemma}(see Lemma 3.14 in \cite{ospyt} for $C_p(X,Y)$)\label{lem6} Let $\psi: K\rightarrow M$ be a surjective continuous almost open
mapping, $0<\alpha\leq \omega_1$, and let $B_{\alpha}(X,K)$ be a
non-meager (Baire) dense subspace of $K^X$. Then $B_{\alpha}(X,M)$
is non-meager (Baire).
\end{lemma}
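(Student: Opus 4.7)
The plan is to exploit the composition map $\psi_\#\colon K^X \to M^X$ defined by $\psi_\#(f) = \psi\circ f$, which is just the product map $\psi^X$. Because $\psi$ is continuous, surjective, and almost open, the same three properties hold for $\psi_\#$ coordinatewise; the almost openness in particular is the fact about products of surjective almost open maps recalled just before Lemma \ref{lem5}. Thus Lemma \ref{lem5} is available for $\psi_\#$ as soon as we exhibit a dense non-meager (Baire) subset of $K^X$, and the hypothesis supplies exactly this: $E := B_\alpha(X,K)$.

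The next step is to prove by transfinite induction on $\alpha$ that $\psi_\#(B_\alpha(X,K)) \subseteq B_\alpha(X,M)$. For $\alpha = 1$: if $f = \lim_n f_n$ pointwise with $f_n \in C(X,K)$, then each $\psi\circ f_n$ lies in $C(X,M)$, and continuity of $\psi$ gives $\psi\circ f = \lim_n \psi\circ f_n$ pointwise, so $\psi\circ f \in B_1(X,M)$. The inductive step is identical in spirit: for $f = \lim_n f_n$ with $f_n \in C_p(X,K)\cup\bigcup_{\beta<\alpha} B_\beta(X,K)$, the inductive hypothesis yields $\psi\circ f_n \in C_p(X,M)\cup\bigcup_{\beta<\alpha} B_\beta(X,M)$, and passing to the pointwise limit puts $\psi\circ f$ into $B_\alpha(X,M)$. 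The case $\alpha = \omega_1$ is immediate from the definition of $B_{\omega_1}$ as a union.

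With this inclusion in place, I apply Lemma \ref{lem5} with $P = K^X$, $L = M^X$, and $E = B_\alpha(X,K)$: the conclusion is that $\psi_\#(B_\alpha(X,K))$ is a non-meager (Baire) subspace of $M^X$. Since $B_\alpha(X,K)$ is dense in $K^X$ and $\psi_\#$ is a continuous surjection, $\psi_\#(B_\alpha(X,K))$ is dense in $M^X$, and therefore dense in the intermediate set $B_\alpha(X,M)$. The proof is then finished by the standard fact that a space containing a dense non-meager (respectively, Baire) subspace is itself non-meager (Baire): for the non-meager version, any cover of $B_\alpha(X,M)$ by closed nowhere dense sets restricts to such a cover of $\psi_\#(B_\alpha(X,K))$; for the Baire version, open dense sets in $B_\alpha(X,M)$ meet the dense subspace in open dense sets, so Baireness of the subspace forces the dense intersection one needs. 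The only places requiring genuine care are the transfinite induction and this final density-transfer step, both of which are routine.
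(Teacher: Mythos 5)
Your proposal is correct and follows essentially the same route as the paper: both use the composition map $\psi^X(f)=\psi\circ f$, invoke Lemma \ref{lem5} for it, check via continuity of $\psi$ that the image of $B_{\alpha}(X,K)$ lands inside $B_{\alpha}(X,M)$, and finish by transferring non-meagerness (Baireness) from a dense subspace. You merely spell out the transfinite induction and the density-transfer step that the paper leaves implicit.
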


\begin{proof} Define the mapping $\psi^X: K^X\rightarrow M^X$ such
that $\psi^X(f)=\psi\circ f$ for every $f\in K^X$.  The mapping
$\psi^X$ is almost open and continuous. Since $B_{\alpha}(X,K)$ is
dense in $K^X$ and is non-meager (Baire), by Lemma \ref{lem5},
$\psi^X(B_{\alpha}(X,K))$ is non-meager (Baire). Since $\psi^X$ is
continuous, $\psi^X(\lim f_n)=\lim \psi^X(f_n)$ for any sequence
$\{f_n: n\in \mathbb{N}\}\subset B_{\beta}(X,K)$ ($\beta<\alpha$)
and, hence, $\psi^X(B_{\alpha}(X,K))=\{\psi\circ f: f\in
B_{\alpha}(X,K)\}\subseteq B_{\alpha}(X,M)$. Thus,
$B_{\alpha}(X,M)$ contains a dense non-meager (Baire) subspace,
hence, it is also non-meager (Baire).
\end{proof}

It is well-known that there are Baire spaces $X$ and $Y$ such that
$X\times Y$ is not Baire \protect\cite{Ox1}. For the product
$\prod\limits_{\beta\in A} B_{\alpha}(X_{\beta},\mathbb{R})$ we
have the following result.

\begin{lemma}\label{lem1} If $B_{\alpha}(X_{\beta},\mathbb{R})$ is Baire  for all $\beta\in
A$. Then $\prod\limits_{\beta\in A}
B_{\alpha}(X_{\beta},\mathbb{R})$ is Baire.
\end{lemma}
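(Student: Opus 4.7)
The plan is to exploit a productive strengthening of Baireness available on spaces of Baire functions. Concretely, I would prove that each factor $B_\alpha(X_\beta, \mathbb{R})$ is Oxtoby pseudocomplete, invoke Oxtoby's classical theorem that pseudocompleteness is productive, and then use that pseudocomplete spaces are Baire.

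The first step is to produce, for each $\beta \in A$, a sequence of $\pi$-bases $\{\mathcal{B}_n^\beta\}_{n \in \mathbb{N}}$ on $B_\alpha(X_\beta, \mathbb{R})$ such that any decreasing sequence $U_n \in \mathcal{B}_n^\beta$ with $\overline{U_{n+1}} \subseteq U_n$ has $\bigcap_n U_n \neq \emptyset$. For $\alpha \geq 2$ this should come essentially from the proof of Corollary~4.2 in \cite{BG}, which establishes Baireness unconditionally and, by inspection, actually witnesses pseudocompleteness. For $\alpha = 1$ one uses the characterization of Baireness of $B_1(X, \mathbb{R})$ from \cite{Os}: the hypothesis that $B_1(X_\beta, \mathbb{R})$ is Baire translates into a combinatorial condition on $X_\beta$, and that condition is precisely what is needed to verify Oxtoby's nonempty-intersection clause on the natural $\pi$-bases of $B_1(X_\beta, \mathbb{R})$ formed by basic rectangles in $\mathbb{R}^{X_\beta}$ with rational parameters. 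The coincidence of Baireness with such completeness properties on $B_\alpha(X, Y)$ is in fact announced in the abstract of the present paper and is the conceptual input here.

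The second step is Oxtoby's theorem: any product of pseudocomplete spaces is pseudocomplete. One builds the product $\pi$-bases from boxes whose finitely many restricted coordinates come from the $\mathcal{B}_n^\beta$'s. A standard diagonal argument shows that any admissible decreasing sequence of such boxes projects to an admissible sequence in each factor (eventually constant on all but finitely many coordinates at each stage), which then yields nonempty intersections factor-by-factor and hence in the product. Combined with the standard implication pseudocomplete $\Rightarrow$ Baire, this yields that $\prod_{\beta \in A} B_\alpha(X_\beta, \mathbb{R})$ is Baire.

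The main obstacle is the first step: upgrading the Baire hypothesis on each factor to Oxtoby pseudocompleteness. These two properties are not equivalent for arbitrary Tychonoff spaces, so the argument must exploit the specific structure of $B_\alpha(X_\beta, \mathbb{R})$ as a dense topological subgroup of the Tychonoff power $\mathbb{R}^{X_\beta}$, together with, in the $\alpha = 1$ case, the full strength of the characterization of Baireness in terms of the underlying space $X_\beta$. One also has to be careful that the construction of the $\pi$-bases is uniform enough across $\beta$ that the diagonal argument in Step~2 goes through without the subtleties of limit ordinals (which would otherwise spoil the naive identification of $\prod_\beta B_\alpha(X_\beta, \mathbb{R})$ with $B_\alpha(\bigsqcup_\beta X_\beta, \mathbb{R})$ for uncountable $A$).
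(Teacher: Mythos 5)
There is a genuine gap, and it sits exactly where you flagged it: the upgrade from ``each factor is Baire'' to ``each factor is Oxtoby (pseudo)complete'' is not available for $\alpha=0$ and $\alpha=1$. In this paper (and in \cite{DAS,BG,ospyt1,Os}) the completeness properties you want to invoke -- Oxtoby complete, Todd complete, Choquet, etc. -- are all equivalent \emph{to each other} on $B_{\alpha}(X,Y)$, and they are all equivalent to $G_{\delta}$-density of $B_{\alpha}(X,Y)$ in $Y^X$ (Theorem~\ref{th11}); note that ``Baire'' is pointedly absent from that list. Concretely, $C_p(X,\mathbb{R})$ is Baire iff every pairwise disjoint sequence of finite subsets of $X$ has a strongly discrete \emph{subsequence} (Theorem~\ref{cor9}(d)), while it is Oxtoby complete iff \emph{every countable subset} of $X$ is strongly discrete; similarly for $B_1$ with ``strongly $Coz_{\delta}$-disjoint'' in place of ``strongly discrete'' (Theorem~\ref{cor10}(d) versus condition (5) of the corresponding theorem). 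These conditions do not coincide, and it is classical (Lutzer--McCoy, van Douwen) that a Baire $C_p(X)$ need not be pseudocomplete. So your Step~1 is provably impossible in the cases $\alpha\le 1$, and the sentence claiming that the abstract announces the coincidence of Baireness with these completeness properties is a misreading: the abstract only asserts that the completeness properties coincide among themselves. For $\alpha\ge 2$ your route does work (there $B_{\alpha}(X,\mathbb{R})$ is unconditionally Choquet, hence Oxtoby complete by Theorem~\ref{th11}), but it is then unnecessary, since Baireness itself is unconditional.

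The paper's proof avoids this entirely. For $\alpha\ge 2$ it identifies $\prod_{\beta\in A} B_{\alpha}(X_{\beta},\mathbb{R})$ with $B_{\alpha}\bigl(\bigoplus_{\beta\in A} X_{\beta},\mathbb{R}\bigr)$ and quotes Corollary~4.2 of \cite{BG}, which gives Baireness of that space with no hypothesis on the $X_{\beta}$. For $\alpha=0$ and $\alpha=1$ it cites the productivity results of Pytkeev and of \cite{Os}; those rest on the fact that the subsequence-type combinatorial characterization of Baireness is preserved under disjoint topological sums (a disjoint sequence of finite sets in $\bigoplus_{\beta} X_{\beta}$ either returns infinitely often to one summand, where the hypothesis applies, or spreads over infinitely many summands, where a strongly discrete, respectively strongly $Coz_{\delta}$-disjoint, subsequence exists automatically). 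If you want a self-contained argument, that is the mechanism to reproduce; pseudocompleteness is the wrong intermediary here.
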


\begin{proof} Since Lemma holds for the cases $\alpha=0$ (see Corollary
3 in \cite{pyt1}) and  $\alpha=1$ (\cite{Os},Corollary 3.8), we
consider $\alpha\geq 2$.

The space $\prod\limits_{\beta\in A}
B_{\alpha}(X_{\beta},\mathbb{R})=B_{\alpha}(\bigoplus\limits_{\beta\in
A} X_{\beta},\mathbb{R})$ (see Corollary 3.8 in \cite{Os}), hence,
$\prod\limits_{\beta\in A} B_{\alpha}(X_{\beta},\mathbb{R})$ is
Baire (see  Corollary 4.2 in \cite{BG}).

\end{proof}

The following Theorem is the generalization of Theorem 3.15 in
\cite{ospyt1} considering for case $\alpha=0$.

\begin{theorem}\label{th1} Let $X$ be a topological space and $1\leq \alpha\leq \omega_1$. Then the following assertions are equivalent:

$(a)$ $B_{\alpha}(X,\mathbb{R})$ is Baire;

$(b)$ $B_{\alpha}(X,M)$ is Baire for some Fr\'{e}chet space $M$,
$dim M>0$;

$(c)$ $B_{\alpha}(X,L)$ is Baire for any Fr\'{e}chet space $L$.

\end{theorem}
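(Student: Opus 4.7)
My plan is to establish the cycle (c)$\Rightarrow$(b)$\Rightarrow$(a)$\Rightarrow$(c), using Lemmas~\ref{lem6} and \ref{lem1} as the main tools. The first implication is immediate: $\mathbb{R}$ is itself a Fr\'{e}chet space of positive dimension, so specialising (c) to $L=\mathbb{R}$ yields (b) with $M=\mathbb{R}$.

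For (b)$\Rightarrow$(a), I would apply the Hahn--Banach theorem to the Fr\'{e}chet space $M$ with $\dim M>0$ to obtain a nonzero continuous linear functional $\phi\colon M\to\mathbb{R}$. Since $\phi$ is a continuous linear surjection between Fr\'{e}chet spaces, the open mapping theorem implies $\phi$ is open, hence almost open. Because $M$ is path-connected, $B_{\alpha}(X,M)$ is dense in $M^{X}$ by the remark in Section~1. Lemma~\ref{lem6} applied with $K=M$ and $\psi=\phi$ then gives that $B_{\alpha}(X,\mathbb{R})$ is Baire.

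The heart of the argument is (a)$\Rightarrow$(c). The strategy is to represent an arbitrary Fr\'{e}chet space $L$ as a surjective continuous almost open image $\psi\colon\mathbb{R}^{\Gamma}\to L$ for some cardinal $\Gamma$, and then transport the Baire property through $\psi$. Granting such a $\psi$, the transport proceeds in three steps: (i) identify $B_{\alpha}(X,\mathbb{R}^{\Gamma})$ with $B_{\alpha}(X,\mathbb{R})^{\Gamma}$ coordinate-wise (a map into a product is of Baire class $\alpha$ iff each coordinate is, by the universal property of the product topology); (ii) apply Lemma~\ref{lem1} with all $X_{\beta}=X$ to conclude $B_{\alpha}(X,\mathbb{R})^{\Gamma}$ is Baire; (iii) note $B_{\alpha}(X,\mathbb{R}^{\Gamma})$ is dense in $(\mathbb{R}^{\Gamma})^{X}$ by path-connectedness of $\mathbb{R}^{\Gamma}$ and apply Lemma~\ref{lem6} with $K=\mathbb{R}^{\Gamma}$ and $M=L$ to deduce that $B_{\alpha}(X,L)$ is Baire.

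The hard part is the construction of $\psi$. For separable $L$, the Anderson--Kadec theorem furnishes a homeomorphism $\mathbb{R}^{\omega}\to L$ (or $\mathbb{R}^{n}\to L$ in the finite-dimensional case), which is trivially an open surjection. For general, possibly non-separable, $L$ I would use Hahn--Banach to embed $L$ as a closed linear subspace of $\mathbb{R}^{\Gamma}$ (taking $\Gamma$ to be a separating family of continuous linear functionals), and then invoke a Dugundji-type extension theorem, exploiting convexity of $L$ inside $\mathbb{R}^{\Gamma}$, to construct a continuous retraction $r\colon\mathbb{R}^{\Gamma}\to L$. Verifying that this retraction is almost open---so that Lemma~\ref{lem6} genuinely applies---is the delicate point, and would require careful choice of the embedding (or, alternatively, a reduction via Toru\'{n}czyk's classification of Fr\'{e}chet spaces by density character to a model case where an explicit almost open surjection is available).
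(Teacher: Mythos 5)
Your implications (c)$\Rightarrow$(b) and (b)$\Rightarrow$(a) are correct and coincide with the paper's: a nonzero continuous linear functional on $M$ is an open surjection onto $\mathbb{R}$ by the open mapping theorem, and Lemma~\ref{lem6} transports the Baire property. The gap is in (a)$\Rightarrow$(c). Your plan rests on producing a continuous almost open surjection $\psi:\mathbb{R}^{\Gamma}\rightarrow L$ for an arbitrary Fr\'{e}chet space $L$, and no such map exists once $L$ is non-separable: $\mathbb{R}^{\Gamma}$ satisfies the countable chain condition (a product of separable spaces is ccc), the ccc is preserved by continuous surjections (preimages of an uncountable cellular family form an uncountable cellular family), and a metrizable space is ccc if and only if it is separable. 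So there is no continuous surjection at all, almost open or not, from $\mathbb{R}^{\Gamma}$ onto a non-separable $L$. The fallback you sketch --- embedding $L$ as a closed linear subspace of $\mathbb{R}^{\Gamma}$ via a separating family of functionals and then retracting --- already fails at the embedding step: the topology that $\mathbb{R}^{\Gamma}$ induces on $L$ under that map is the weak topology, which is strictly coarser than the Fr\'{e}chet topology whenever $L$ is infinite-dimensional, so the image is not a copy of $L$. There is also a secondary issue: the identification $B_{\alpha}(X,\mathbb{R}^{\Gamma})=B_{\alpha}(X,\mathbb{R})^{\Gamma}$ is only needed, and only straightforwardly provable by induction, for countable $\Gamma$; for uncountable $\Gamma$ and limit $\alpha$ the diagonal-sequence argument would require bounding uncountably many ordinals below $\alpha$.

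The paper avoids any global reduction of $L$ to a power of $\mathbb{R}$. It proves (a)$\Rightarrow$(c) by contradiction: if a basic open set $W$ of $B_{\alpha}(X,L)$ is a countable union of nowhere dense sets $F_m$, an inductive construction produces finite sets $\Delta_n\subseteq X$, countably many open subsets of $L$, and an increasing sequence of separable linear subspaces $L_n$, whose closed union $L^*$ is a separable Fr\'{e}chet space in which the chosen open sets trace a $\pi$-base. Toru\'{n}czyk's theorem makes $L^*$ homeomorphic to $\mathbb{R}^{\gamma}$ with $\gamma\leq\aleph_0$, so $B_{\alpha}(X,L^*)\cong\prod_{i<\gamma}B_{\alpha}(X,\mathbb{R})$ is Baire by Lemma~\ref{lem1}, and a sequence of dense open sets $M_{p+1}$ in a suitable open subset of $B_{\alpha}(X,L^*)$ yields a function in $W\setminus\bigcup_m F_m$, a contradiction. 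In short, separability is not assumed but manufactured locally around the putative witness of non-Baireness; that localization is exactly the idea your argument is missing, while your separable case (Anderson--Kadec/Toru\'{n}czyk plus Lemma~\ref{lem1}) is essentially the computation the paper performs for $L^*$.
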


\begin{proof} $(b)\Rightarrow(a)$.
 Let $\varphi$ be a continuous liner functional on $M$,
 $\varphi\neq 0$. By \cite{sh} (see Chapter III, Corollary 1), $\varphi:M\rightarrow \mathbb{R}$ is
 a surjective continuous open mapping. By Lemma \ref{lem6}, $B_{\alpha}(X)$
 is Baire.

$(a)\Rightarrow(c)$.  It is well-known that a space $X$ is Baire
if and only if every open subspace of $X$ is a non-meager.

Assume that $B_{\alpha}(X,L)$ is non-Baire for some Fr\'{e}chet
space $L$.  Then, there is a basis open set $W=\bigcap
\{[x_i,V_i]: i\leq k_0\}=\bigcup\limits_{m=1}^{\infty} F_m$ in
$B_{\alpha}(X,L)$ where $F_m$ is a nowhere dense set and we can
assume that $F_m\subseteq F_{m+1}$ for all $m\in\mathbb{N}$.

Let $\Delta_0=\{x_i:i\leq k_0\}$, $\mu_0=\{V_i:i\leq k_0\}$.
Construct, by induction, finite sets $\Delta_n=\{x_{n,i}: i\leq
k_n\}\subseteq X$, $\Delta_n\cap \Delta_0=\emptyset$,
$n\in\mathbb{N}$, finite families $\mu_n$, $\mu_n\subseteq
\mu_{n+1}$, of open sets in $L$, open sets $V_{n,i}\in \mu_n$ in
$L$, $i\leq k_n$, $n\geq 2$, separable linear subspaces $L_n$ of
$L$, $L_n\subseteq L_{n+1}$, $n\in\mathbb{N}$, families
$\mathcal{B}_n=\{O_{n,m}: m\in\mathbb{N}\}$ of open sets in $L$
for $n\in\mathbb{N}$ such that

(1) for every basic set $\bigcap \{[x,V(x)]: x\in
\bigcup\limits_{i=0}^n \Delta_i\}$, $n>1$, where $V(x)\in\mu_n$,
$V(x_i)\subseteq V_i$, $i\leq k_0$ there are $V'(x)\in\mu_{n+1}$,
$V'(x)\subseteq V(x)$, $x\in \bigcup\limits_{i=0}^n \Delta_i$ such
that $\bigcap \{[x,V'(x)]: x\in \bigcup\limits_{i=0}^n
\Delta_i\}\cap \bigcap\{[x_{n+1,i},V_{n+1,i}]: i\leq k_{n+1}\}\cap
F_n=\emptyset$,

(2) for every $V\in \mu_n$ there is $V'\in \mu_{n+1}$ such that
$\overline{V'}\subseteq V$, $n\geq 0$,

(3) $\mathcal{B}_n=\{O_{n,m}:m\in \mathbb{N}\}$ such that
$\{O_{n,m}\cap L_n: m\in \mathbb{N}\}$ is a base in $L_n$ and
$diam O_{n,m}\rightarrow 0$ ($m\rightarrow \infty$), $n\in
\mathbb{N}$,

(4) $\mu_n\cup\{O_{k,m}: k,m\leq n+1\}\subseteq \mu_{n+1}$, $n\in
\mathbb{N}$.

Let $L^*=\overline{\bigcup\limits_{n=1}^{\infty} L_n}$. Then $L^*$
is a separable Fr\'{e}chet space. By (3) and (4), $V\cap L^*\neq
\emptyset$ for all $V\in \bigcup\limits_{n=0}^{\infty} \mu_n$ and
$\{V\cap L^*: V\in \bigcup\limits_{n=0}^{\infty} \mu_n\}$ is a
$\pi$-base in $L^*$. By \cite{Tur}, $L^*$ is homeomorphic to
$\mathbb{R}^{\gamma}$, $\gamma\leq \aleph_0$.

Since $C_p(X, \mathbb{R}^{\gamma})=\prod\limits_{i<\gamma}
C_p(X,\mathbb{R}_i)$ where $\mathbb{R}_i=\mathbb{R}$ for each
$i<\gamma$ (Proposition 2.6.10 in \cite{Eng}), we get, by
induction of $\beta$, that $B_{\beta}(X,
\mathbb{R}^{\gamma})=\prod\limits_{i<\gamma}
B_{\beta}(X,\mathbb{R}_i)$ for each $\beta\leq \alpha$. Hence,
$B_{\alpha}(X, L^*)\simeq B_{\alpha}(X,
\mathbb{R}^{\gamma})=\prod\limits_{i<\gamma}
B_{\alpha}(X,\mathbb{R}_i)$ and, by Lemma \ref{lem1},
$B_{\alpha}(X,L^*)$ is Baire.

 Let
$M_{p+1}=\bigcup \{\bigcap\{[(x,V(x)\cap L^*]: x\in
\bigcup\limits_{i=0}^{n+1} \Delta_i$ where $V(x_i)\subseteq V_i$,
$i\leq k_0$, $n\geq p+1$, $V(x_{n+1,i})\subseteq V_{n+1,i}$,
$i\leq k_{n+1}$, $V(x)\in \bigcup\limits_{i=0}^{n+1} \mu_i$ and
$\bigcap \{[x,V(x)]: x\in \bigcup\limits_{i=0}^{n+1}
\Delta_i\}\cap F_n=\emptyset\}$, $p\in \mathbb{N}$. Then $M_{n+1}$
is a non-empty open subset of $B_{\alpha}(X,L^*)$ for $n\geq 0$.

We claim that $M_{p+1}$ is dense in $P^*=\bigcap \{[x_i, V_i\cap
L^*]: i\leq k_0\}\subseteq B_{\alpha}(X,L^*)$. Let $\varphi\in
P^*$ and let $O(\varphi)$ be a base neighborhood of $\varphi$ in
$B_{\alpha}(X,L^*)$. We can assume that $O(\varphi)=\bigcap
\{[x,W(x)]: x\in \bigcup\limits_{i=0}^{m}
\Delta_i\}\cap\bigcap\{[y,W(y)]:y\in T\subseteq X\setminus
\bigcup\limits_{i=0}^{\infty} \Delta_i\}$ where $W(x_i)\subseteq
V_i\cap L^*$, $i\leq k_0$ and $W(x)$ ( $x\in
\bigcup\limits_{i=0}^{m} \Delta_i$)and $W(y)$ ($y\in T$) are
non-empty open sets in $L^*$ , $T$ is finite and $m\in\mathbb{N}$.

Since $\{V\cap L^*: V\in\bigcup\limits_{n=0}^{\infty} \mu_n\}$ is
a $\pi$-base of $L^*$ there are $x\in \bigcup\limits_{i=0}^{m}
\Delta_i$ and $V(x)\in \bigcup\limits_{n=0}^{\infty} \mu_n$  such
that $V(x)\cap L^*\subseteq W(x)$ for $x\in
\bigcup\limits_{i=0}^{m} \Delta_i$. Then, there is
$k\in\mathbb{N}$ such that $V(x)\in \mu_l$ for
$x\in\bigcup\limits_{i=0}^{m} \Delta_i$, $k\geq m$, $k\geq p+1$.
By (3), there are sets $V'(x)\in \mu_{l+1}$,
$x\in\bigcup\limits_{i=0}^{l+1} \Delta_i$ such that
$V'(x)\subseteq V(x)$, $x\in \bigcup\limits_{i=0}^{m} \Delta_i$,
$V'(x_{l+1,i})\subseteq V_{l+1,i}$, $i\leq k_{l+1}$ and $\bigcap
\{[x,V'(x)]: x\in\bigcup\limits_{i=0}^{k+1} \Delta_i\}\cap
F_k=\emptyset$.

Choose $g\in \bigcap \{[x,V'(x)\cap L^*]:
x\in\bigcup\limits_{i=0}^{k+1} \Delta_i\}\cap \bigcap \{[y,W(y)]:
y\in T\}$. Then $g\in O(\varphi)\cap P^*\cap M_{p+1}$. Hence,
$M_{p+1}$ is a dense open set in $P^*$. Since $B_{\alpha}(X,L^*)$
is Baire, $\bigcap\limits_{p=0}^{\infty} M_{p+1}\neq \emptyset$.
Let $g\in \bigcap\limits_{p=0}^{\infty} M_{p+1}$. So we proved
that $g\not\in \bigcup\limits_{m=1}^{\infty} F_m=W$ which is a
contradiction.

$(c)\Rightarrow(b)$. It is trivial.

\end{proof}

By Theorem \ref{th1} and Theorem 3.15 in \cite{ospyt1} and
Toru\'{n}czyk results in \cite{Tur}, we get the following
corollary.

\begin{corollary} Let $X$ be a topological space and $0\leq \alpha\leq \omega_1$. Then the following assertions are equivalent:

$(a)$ $B_{\alpha}(X,\mathbb{R})$ is Baire;

$(b)$ $B_{\alpha}(X,Y)$ is Baire for any Banach space $Y$.

\end{corollary}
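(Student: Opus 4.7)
The plan is to derive this directly from the two preceding equivalence theorems, together with the elementary observation that the class of Banach spaces sits strictly between $\{\mathbb{R}\}$ and the class of Fréchet spaces.

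I would begin by splitting the ordinal range in two. For $\alpha=0$, the space $B_\alpha(X,Y)$ coincides with $C_p(X,Y)$, and Theorem 3.15 in \cite{ospyt1} already records that $C_p(X,\mathbb{R})$ is Baire if and only if $C_p(X,L)$ is Baire for every Fréchet space $L$. For $1\le\alpha\le\omega_1$, the analogous equivalence is precisely Theorem \ref{th1} proved above. So in each case I may assume the stronger statement that $B_\alpha(X,\mathbb{R})$ is Baire if and only if $B_\alpha(X,L)$ is Baire for every Fréchet space $L$.

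The implication $(a)\Rightarrow(b)$ is then read off by specialization: every Banach space $Y$ is a complete, metrizable, locally convex topological vector space, hence a Fréchet space, so taking $L=Y$ in the Fréchet-space equivalence gives that $B_\alpha(X,Y)$ is Baire. For the reverse implication $(b)\Rightarrow(a)$ it suffices to take $Y=\mathbb{R}$ in $(b)$, since $\mathbb{R}$ with the usual norm is a Banach space.

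The citation of Toru\'{n}czyk's classification \cite{Tur} is not needed separately at the level of the corollary; it enters only inside the proof of Theorem \ref{th1}, where the separable closed subspace $L^{*}=\overline{\bigcup_n L_n}$ produced by the inductive construction is identified with $\mathbb{R}^{\gamma}$, $\gamma\le\aleph_0$, so that Lemma \ref{lem1} can be applied. Because all of this machinery has already been absorbed into Theorem \ref{th1} and Theorem 3.15 of \cite{ospyt1}, the corollary carries no genuine obstacle and reduces to a one-line specialization in each direction.
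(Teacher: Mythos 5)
Your proof is correct and matches the paper's intended derivation: the paper offers no explicit argument, simply citing Theorem~\ref{th1}, Theorem~3.15 of \cite{ospyt1} (which covers the case $\alpha=0$), and Toru\'{n}czyk's results, and your two-line specialization of the Fr\'{e}chet-space equivalence --- every Banach space is a Fr\'{e}chet space for $(a)\Rightarrow(b)$, and $\mathbb{R}$ is itself a Banach space for $(b)\Rightarrow(a)$ --- is exactly what is meant. Your observation that the citation of \cite{Tur} does no independent work at the level of the corollary, having already been absorbed into the proof of Theorem~\ref{th1}, is also accurate.
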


A collection $\mathcal{G}$ of subsets of $X$ is {\it discrete} if
each point of $X$ has a neighborhood meeting at most one element
of $\mathcal{G}$, and is {\it strongly discrete} if for each $G\in
\mathcal{G}$ there is an open superset $U_G$ of $G$ such that
$\{U_G: G\in\mathcal{G}\}$ is discrete.

By results in \cite{pyt1, vD,ospyt1,  tk}, we get the following
corollary for the case $\alpha=0$.

\begin{corollary}\label{cor9} Let $X$ be a topological space. Then the following assertions are equivalent:

$(a)$ $C_p(X,\mathbb{R})$ is Baire;

$(b)$ $C_p(X,M)$ is Baire for some Fr\'{e}chet space $M$, $dim
M>0$;

$(c)$ $C_p(X,Y)$ is Baire for any Fr\'{e}chet space $Y$;

$(d)$ every pairwise disjoint sequence of non-empty finite subsets
of $X$ has a strongly discrete subsequence.
\end{corollary}

A set of the form $f^{-1}(\{0\})$ for some real-valued continuous
function $f$ on $X$ is called a {\it zero-set} of $X$. A subset
$O\subseteq X$  is called  a cozero-set (functionally open) of $X$
if $X\setminus O$ is a zero-set of $X$. Countable intersection of
cozero sets by $Coz_{\delta}$ (or $Coz_{\delta}(X)$). A
$Coz_{\delta}$-subset of $X$ containing $x$ is called a {\it
$Coz_{\delta}$ neighborhood} of $x$.

 A set $A\subseteq X$ is called {\it strongly
$Coz_{\delta}$-disjoint}, if there is a pairwise disjoint
collection $\{F_a: F_a$ is a $Coz_{\delta}$ neighborhood of $a$,
$a\in A\}$  such that $\{F_a: a\in A\}$ is a {\it completely
$Coz_{\delta}$-additive system}, i.e. $\bigcup\limits_{b\in B}
F_b\in Coz_{\delta}$ for each $B\subseteq A$.

A disjoint sequence $\{\Delta_n: n\in \mathbb{N}\}$ of (finite)
sets is  {\it strongly $Coz_{\delta}$-disjoint} if the set
$\bigcup\{\Delta_n: n\in \mathbb{N}\}$ is strongly
$Coz_{\delta}$-disjoint.

\medskip

In \cite{Os}, we have obtained a characterization when a function
space $B_1(X,\mathbb{R})$ is Baire for a topological space $X$. By
results in \cite{Os} and Theorem \ref{th1}, we get the following
result for the case $\alpha=1$.

\begin{corollary}\label{cor10} Let $X$ be a topological space. Then the following assertions are equivalent:

$(a)$ $B_{1}(X,\mathbb{R})$ is Baire;

$(b)$ $B_{1}(X,M)$ is Baire for some Fr\'{e}chet space $M$, $dim
M>0$;

$(c)$ $B_{1}(X,Y)$ is Baire for any Fr\'{e}chet space $Y$;

$(d)$ every pairwise disjoint sequence of non-empty finite subsets
of $X$ has a strongly $Coz_{\delta}$-disjoint subsequence.
\end{corollary}

By Theorem 4.1 in \cite{BG} and Theorem \ref{th1}, we get the
following result for the cases $\alpha\geq 2$.

\begin{corollary} For any topological space $X$, any Fr\'{e}chet space
$Y$ and every ordinal $2\leq \alpha\leq \omega_1$, the function
space $B_{\alpha}(X,Y)$ is Baire.
\end{corollary}

In particular, for separating examples, by results in
\cite{pyt1,BG, Os}, we get that for any Fr\'{e}chet space $Y$
holds

1. for every $\alpha\geq 2$ the space $B_{\alpha}(\mathbb{R},Y)$
is Baire, but $B_1(\mathbb{R},Y)$ is not.

2. the space $B_1(\mathbb{Q},Y)$ is Baire, but $C_p(\mathbb{Q},Y)$
is not.

\section{Completeness  and compactness properties of $B_{\alpha}(X,Y)$}

A product of two Baire metric spaces need not be Baire \cite{Ox1}.
This result prompted for a search of subclasses of the class of
Baire spaces which are closed under taking arbitrary Cartesian
products. The first such class was introduced by J.C. Oxtoby
\cite{Ox1} under the name {\it pseudocomplete} spaces.

\begin{definition}

 (1) A family $\mathcal{B}$ of sets in a topological space $X$ is
called {\it $\pi$-base} (respectively, {\it $\pi$-pseudobase}) if
every element of $\mathcal{B}$ is open (respectively, has a
nonempty interior) and every nonempty open set in $X$ contains an
element of $\mathcal{B}$.

(2) A space is {\it Oxtoby complete} (respectively, {\it Todd
complete}, {\it Telgarsky complete}) if there is a sequence
$\{\mathcal{B}_n: n\in \mathbb{N}\}$ of $\pi$-bases,
(respectively, $\pi$-pseudobases, bases) in $X$ such that for any
sequence $\{U_n: n\in \mathbb{N}\}$ where $U_n\in \mathcal{B}_n$
and $cl_X U_{n+1}\subseteq Int U_n$ for all $n$, then
$\bigcap\limits_{n\in \mathbb{N}} U_n\neq \emptyset$.

(3) A space is {\it S\'{a}nchez-Okunev complete}  if there is a
sequence $\{\mathcal{B}_n: n\in \mathbb{N}\}$ of $\pi$-bases
consists of zero-sets in $X$ such that for any sequence $\{U_n:
n\in \mathbb{N}\}$ where $U_n\in \mathcal{B}_n$ and
$U_{n+1}\subseteq Int U_n$ for all $n$, then $\bigcap\limits_{n\in
\mathbb{N}} U_n\neq \emptyset$.
\end{definition}

It is a well-known that any Todd complete space is Baire and any
$\check{C}$ech-complete space is Oxtoby complete. Note that if $X$
has a dense Oxtoby complete subspace (in particular, if $X$ has a
dense $\check{C}$ech-complete subspace) then $X$ is Oxtoby
complete (p. 47 in \protect\cite{Tk}).

\begin{lemma}(Propositions 6.3 and 6.4 in \cite{DARHTM})\label{lem42}
Suppose that $X$ is Oxtoby complete (Todd complete) and
$f:X\rightarrow Y$ is a continuous, open and onto function. If $Y$
is metrizable, then $Y$ contains a dense, complete metrizable,
zero-dimensional subspace. In particular, $Y$ is Oxtoby complete
(Todd complete).
\end{lemma}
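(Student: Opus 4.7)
The plan is to build inside $Y$ a dense subspace $Y^*$ homeomorphic to a closed subset of the Baire space $\omega^\omega$, using a Souslin scheme of shrinking $\pi$-base elements of $X$ whose $f$-images have shrinking diameters in $Y$. A closed subset of $\omega^\omega$ is automatically zero-dimensional and completely metrizable, hence $\check{C}$ech-complete and therefore Oxtoby complete; the ``in particular'' clause then follows from the remark (cited from \cite{Tk}, p.~47) that a space with a dense Oxtoby complete subspace is itself Oxtoby complete, and an analogous fact handles the Todd case.

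First I would fix a compatible metric $\rho$ on $Y$ and replace each witnessing $\pi$-base $\mathcal{B}_n$ of $X$ by $\mathcal{B}'_n:=\{U\in\mathcal{B}_n:\mathrm{diam}_\rho f(U)<1/n\}$. Continuity of $f$ together with the $\pi$-base property of $\mathcal{B}_n$ makes $\mathcal{B}'_n$ still a $\pi$-base in $X$, and since $\mathcal{B}'_n\subseteq\mathcal{B}_n$ it still witnesses Oxtoby completeness. I would then recursively construct a tree $\{U_s\}_{s\in T}$ with $T\subseteq\omega^{<\omega}$, $U_s\in\mathcal{B}'_{|s|}$, $cl_X U_{s^\frown i}\subseteq U_s$, images $\{f(U_{s^\frown i})\}_i$ pairwise disjoint, and $\bigcup_i f(U_{s^\frown i})$ dense in $f(U_s)$. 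The children of $U_s$ are obtained by a Zorn argument: take an inclusion-maximal family of elements of $\mathcal{B}'_{|s|+1}$ whose $X$-closures lie in $U_s$ and whose $f$-images are pairwise disjoint; maximality, combined with regularity of $X$ and the $\pi$-base property, forces the image union to be dense in $f(U_s)$, since otherwise the preimage of the missing open piece would admit a further admissible element of $\mathcal{B}'_{|s|+1}$.

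Setting $Y^*:=\bigcap_n\bigcup_{|s|=n}f(U_s)$, each level set is open in $Y$ (as a union of open sets, since $f$ is open) and dense by induction on $n$. Open continuous images of Baire spaces are Baire, and Oxtoby complete spaces are Baire, so $Y$ is Baire and $Y^*$ is dense. For each branch $\pi\in[T]$, Oxtoby completeness applied to the shrinking sequence $\{U_{\pi|n}\}$ gives $\bigcap_n U_{\pi|n}\neq\emptyset$, and the diameter control forces $\bigcap_n f(U_{\pi|n})$ to be a singleton $\{y_\pi\}$. Sibling disjointness propagates to all level-$n$ images (two distinct level-$n$ nodes diverge, after their longest common prefix, into disjoint sibling images), so each $y\in Y^*$ lies in exactly one $f(U_s)$ at each level, and $\pi\mapsto y_\pi$ is a homeomorphism from the closed tree body $[T]\subseteq\omega^\omega$ onto $Y^*$; continuity in both directions follows from the diameter estimate and the openness of the level-$n$ ``cylinders'' $f(U_{\pi|n})$ in $Y$.

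The main obstacle is the recursive step: the $U_{s^\frown i}\in\mathcal{B}'_{|s|+1}$ must be chosen so that their $f$-images are simultaneously pairwise disjoint (needed for zero-dimensionality and for the $\pi\mapsto y_\pi$ bijection) and collectively dense in $f(U_s)$ (needed for density of $Y^*$), while one has direct control only of the $\pi$-base structure upstairs in $X$. The Zorn/maximality device negotiates this by enforcing disjointness downstairs in $Y$ after lifting candidates back to $\mathcal{B}'_{|s|+1}$ in $X$. The Todd-complete variant proceeds identically with $\mathcal{B}'_n$ a diameter-controlled subfamily of the given $\pi$-pseudobase and the shrinking condition rewritten as $cl_X U_{s^\frown i}\subseteq Int\, U_s$; openness of $f$ gives $f(Int\, U)\subseteq Int\, f(U)$, so the interior/diameter bookkeeping transports to $Y$ verbatim.
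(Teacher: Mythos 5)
The paper does not actually prove Lemma~\ref{lem42}: it is imported verbatim, with a citation, from Propositions 6.3 and 6.4 of \cite{DARHTM}, so there is no in-paper argument to compare yours against. Your Souslin-scheme proof is a legitimate self-contained argument and is essentially the standard way this result is obtained: shrinking the $\pi$-bases (resp.\ $\pi$-pseudobases) to diameter-controlled subfamilies $\mathcal{B}'_n$, growing a tree of elements with pairwise disjoint, collectively dense images via maximal families, and reading off a dense subspace $Y^*$ of $Y$ homeomorphic to the branch space of the tree. I checked the key steps --- that $\mathcal{B}'_n$ is still a $\pi$-(pseudo)base witnessing the completeness, that maximality plus regularity plus openness of $f$ forces density of the children's images, that level-$n$ images are pairwise disjoint by propagating sibling disjointness, that $Y^*$ is dense because $Y$ is an open continuous image of a Baire space, and that $\pi\mapsto y_\pi$ is a homeomorphism --- and they all go through, including the Todd variant with $Int\,U_s$ in place of $U_s$.

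One inaccuracy needs repair: you cannot in general take $T\subseteq\omega^{<\omega}$. The maximal family of elements of $\mathcal{B}'_{|s|+1}$ with pairwise disjoint images may be uncountable when $Y$ is non-separable (e.g.\ for $Y$ an uncountable discrete metric space, the diameter bound $<1/n$ forces singleton images at level $n\geq 2$, and density then requires uncountably many children). The fix is harmless: index children by an arbitrary set, so that $[T]$ is a closed subspace of $\kappa^{\omega}$ for some cardinal $\kappa$ with $\kappa$ discrete; such a space is still completely metrizable and zero-dimensional, which is all the lemma asserts (it claims ``complete metrizable, zero-dimensional,'' not Polish). Note also that the detour through the homeomorphism with $[T]$ is genuinely necessary and you were right not to shortcut it: $Y^*$ is a $G_\delta$-set in $Y$, but $Y$ itself need not be completely metrizable, so $G_\delta$-ness alone would not yield complete metrizability of $Y^*$.
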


\begin{lemma}(\cite{Os})\label{lem43} Let $Y$ be a topological vector space and
$L\subseteq Y$ a dense $\check{C}$ech-complete subspace. Then the
linear span of $L$ equal to $Y$.
\end{lemma}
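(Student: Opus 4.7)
The plan is to prove the stronger statement $L-L=Y$, from which $\mathrm{span}(L)=Y$ is immediate. The argument reduces to a Baire-category computation once we know that $L$ is a dense $G_\delta$ subset of $Y$, so the first step is to establish this. Because all spaces in the paper are Tychonoff, $Y$ embeds densely in its Stone--$\check{C}$ech compactification $\beta Y$. Since $L$ is dense in $Y$ and $Y$ is dense in $\beta Y$, $L$ is dense in $\beta Y$, so $\beta Y$ is a compactification of $L$. By the standard characterisation of $\check{C}$ech-completeness, $L$ is $G_\delta$ in every compactification, hence $L=\bigcap_{n}W_n$ with each $W_n$ open in $\beta Y$; intersecting with $Y$ exhibits $L=\bigcap_n(W_n\cap Y)$ as a $G_\delta$ subset of $Y$.

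Writing $L=\bigcap_n U_n$ with each $U_n$ open in $Y$, and noting that each $U_n$ is dense (since $L\subseteq U_n$ and $L$ is dense), we obtain that $Y\setminus L=\bigcup_n(Y\setminus U_n)$ is meager in $Y$. Moreover $L$, being $\check{C}$ech-complete, is a Baire subspace of $Y$, and is dense in $Y$; hence $Y$ itself is Baire by the elementary fact that a space containing a dense Baire subspace is Baire.

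Now fix an arbitrary $x\in Y$. Translation $y\mapsto y+x$ is a homeomorphism of the topological vector space $Y$, so $L+x$ is again a dense $G_\delta$ in $Y$ with meager complement. Consequently
\[
Y\setminus\bigl(L\cap(L+x)\bigr)=(Y\setminus L)\cup\bigl(Y\setminus(L+x)\bigr)
\]
is meager in the Baire space $Y$, forcing $L\cap(L+x)\neq\emptyset$. Picking $a\in L\cap(L+x)$ and setting $b=a-x\in L$ yields $x=a-b\in L-L$. Hence $L-L=Y\subseteq\mathrm{span}(L)$, which gives the desired conclusion.

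The main obstacle is the first step, namely the assertion that a dense $\check{C}$ech-complete subspace of a Tychonoff space is automatically $G_\delta$ in that space; this depends on the compactification characterisation of $\check{C}$ech-completeness. The remaining arguments are routine Baire-category combined with the continuity of translation in a topological vector space.
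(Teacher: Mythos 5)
Your proof is correct. The paper does not reproduce a proof of this lemma (it is quoted from \cite{Os}), but your argument is the standard one and is complete: a dense \v{C}ech-complete subspace $L$ of the Tychonoff space $Y$ is a dense $G_\delta$ (via $\beta Y$ being a compactification of $L$), hence has meager complement in the Baire space $Y$, and the translation argument then gives $L-L=Y\subseteq\mathrm{span}(L)$; every step, including the facts that \v{C}ech-complete spaces are Baire and that a dense Baire subspace forces the ambient space to be Baire, is sound.
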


A subset $A$ of a topological space $X$ is called {\it
$G_{\delta}$-dense} in $X$ if $A$ has nonempty intersection with
any nonempty $G_{\delta}$-set in $X$.

A family $\mathcal{F}\subseteq Y^X$ of functions from a set $X$ to
a set $Y$ is called {\it $\omega$-full} in $Y^X$ if each function
$f:Z\rightarrow Y$ defined on a countable subset $Z\subseteq X$
has an extension $f\in \mathcal{F}$.

Both Baire and meager spaces have game characterizations due to
Oxtoby. Oxtoby considered two infinite games $G_E(X)$ and $G_N(X)$
played by two players $E$ and $N$ on a topological space $X$
\cite{Ox}.

\begin{theorem}\label{th40}(Oxtoby). A topological space $X$ is

(1) meager iff the player $E$ has a winning strategy in the game
$G_N(X)$;

(2) Baire iff the player $E$ has no winning strategy in the game
$G_E(X)$.

\end{theorem}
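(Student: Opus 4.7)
The approach is to handle the two parts as dual strategy-theoretic arguments, each split into an easy and a hard direction. In both games a play produces a descending sequence $U_1 \supseteq V_1 \supseteq U_2 \supseteq V_2 \supseteq \cdots$ of nonempty open sets, with $E$ choosing the $U_n$, $N$ choosing the $V_n$, and $E$ winning precisely when $\bigcap_n U_n = \emptyset$; the two games differ only in which player moves first ($E$ first in $G_E(X)$, $N$ first in $G_N(X)$).

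For part (1), the easy direction supposes $X = \bigcup_n F_n$ with each $F_n$ closed and nowhere dense. The strategy for $E$ in $G_N(X)$ is to respond to $N$'s move $V_n$ by any nonempty open $U_{n+1} \subseteq V_n \setminus F_n$; such a set exists because $F_n$ has empty interior, and the resulting intersection is disjoint from every $F_n$, hence empty. For part (2), the analogous easy direction observes that if $X$ is not Baire then some nonempty open $W \subseteq X$ is meager, $W = \bigcup_n F_n$ with $F_n$ closed nowhere dense; the strategy for $E$ in $G_E(X)$ plays $U_1 = W$ as its opening move and then proceeds exactly as above.

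The two hard directions form the real content, and this is where I expect the main obstacle. Given a winning strategy $\sigma$ for $E$, I would construct by induction on $n$ a family $\mathcal{U}_n$ of pairwise disjoint nonempty open sets, each of the form $\sigma(V_1,\dots,V_{n-1})$ for a $\sigma$-legal play extending some predecessor in $\mathcal{U}_{n-1}$. A Zorn's lemma argument at each level produces such a family $\mathcal{U}_n$ with $\bigcup \mathcal{U}_n$ dense in $\bigcup \mathcal{U}_{n-1}$; then $F_n := \overline{\bigcup \mathcal{U}_{n-1}} \setminus \bigcup \mathcal{U}_n$ is closed nowhere dense. Because $\sigma$ is winning, no point of $X$ can lie in the intersection of a single branch of this tree, so every point eventually falls into some $F_n$. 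In case (1) (where $N$ moves first, so the tree already reaches the whole of $X$ at level $1$) this yields $X = \bigcup_n F_n$, meagerness. In case (2) (where $E$'s opening move $U_1$ restricts the tree to a single $W = U_1$) we conclude that $W$ is meager, hence $X$ is not Baire.

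The delicate technical step in both hard directions is the coordinated bookkeeping of the maximal antichains: at each level one must simultaneously (i) preserve density of $\bigcup \mathcal{U}_n$ in the previous-level union, so that the $F_n$'s remain nowhere dense, and (ii) arrange for the refinement relation between $\mathcal{U}_{n-1}$ and $\mathcal{U}_n$ to be strict enough that the tree branches are exhaustive of $X$ (or of $W$), so that every point is genuinely captured. This is where I would concentrate the careful writing; the rest is routine use of the game definitions.
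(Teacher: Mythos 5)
The paper does not actually prove this theorem: it is Oxtoby's classical result on the Banach--Mazur/Choquet games, stated with a citation to \cite{Ox} (see also Kechris, 8.H), so there is no in-paper argument to compare against. Judged on its own terms, your reconstruction is the standard proof and its essentials are correct: the two easy directions (a meager decomposition $\bigcup_n F_n$ hands $E$ the strategy ``shrink into $V_n\setminus F_n$'', with $E$ first declaring $U_1=W$ for a meager nonempty open $W$ in the $G_E$ case), and the two hard directions via a tree of $\sigma$-legal partial plays with maximal pairwise disjoint families $\mathcal{U}_n$ of $E$'s responses at each level. The points that genuinely need care, and which you correctly flag, are: (i) maximality of the antichain forces $\bigcup\mathcal{U}_n$ to be dense in $\bigcup\mathcal{U}_{n-1}$ (otherwise $\sigma$ applied to a play entering the residual open set would extend the antichain), so each $F_n=\overline{\bigcup\mathcal{U}_{n-1}}\setminus\bigcup\mathcal{U}_n$ is closed nowhere dense; (ii) a point lying in every $\bigcup\mathcal{U}_n$ selects, by pairwise disjointness at each level together with the parent--child containments, a \emph{unique} branch, which is a single infinite play consistent with $\sigma$; since $\sigma$ is winning for $E$ that play has empty intersection, a contradiction. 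One bookkeeping detail worth making explicit in a full write-up: strategies may depend on the entire history, not just the last move, so each node of the tree must carry its full partial play $(V_1,U_1,\dots,V_n,U_n)$, not merely the open set $U_n$; your notation $\sigma(V_1,\dots,V_{n-1})$ indicates you are aware of this. With that spelled out, the argument is complete and is exactly the proof one finds in Oxtoby's paper.
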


A topological space $X$ is defined to be {\it Choquet} if the
player $N$ has a winning strategy in the game $G_E(X)$. Choquet
spaces were introduced by White \cite{Wh} who called them {\it
weakly $\alpha$-favorable} spaces.

A topological space $X$ is called {\it strong Choquet} if the
player $N$ has a winning strategy in a modification $G^s_E(X)$ of
the game $G_E(X)$. More information on (strong) Choquet spaces can
be found in (\cite{Ke}, 8.CD).

A topological space $X$ is called {\it countably base-compact} if
it has a base $\mathcal{B}$ of the topology such that for any
countable centered subfamily $\mathcal{S}\subseteq \mathcal{B}$
the intersection $\bigcap\limits_{S\in \mathcal{S}} \overline{S}$
is not empty \cite{AL}.

The following result was proved in the paper (\cite{BG}, Theorem
2.20) for each dense subgroup of $\mathbb{R}^{\kappa}$ where
$\kappa$ is any cardinal number.

\begin{theorem}\label{th30} For a topological space $X$, any Fr\'{e}chet space $Y$ and each dense
subgroup $G$ of $Y^X$, the following assertions are equivalent:

$(1)$ $G$ is $\omega$-full in $Y^X$;

$(2)$ $G$ is $G_{\delta}$-dense in $Y^X$;

$(3)$ $G$ is countably base-compact;

$(4)$ $G$ is strong Choquet;

$(5)$ $G$ is Choquet.
\end{theorem}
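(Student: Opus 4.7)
The plan is to prove the theorem by establishing the cycle $(1) \Rightarrow (2) \Rightarrow (3) \Rightarrow (4) \Rightarrow (5) \Rightarrow (1)$, following the blueprint of Theorem 2.20 in \cite{BG} but replacing $\mathbb{R}$ throughout by an arbitrary Fr\'{e}chet space $Y$. The features of $Y$ that I plan to exploit are: $Y$ is metrizable by a complete translation-invariant metric $d$, and the product $Y^X$ admits the standard cylindrical base $[F,W] = \{h \in Y^X : h|_F \in W\}$ with $F \subseteq X$ finite and $W \subseteq Y^F$ open, so every basic open set depends on only finitely many coordinates.

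For $(1) \Rightarrow (2)$, given a nonempty $G_{\delta}$-set $H = \bigcap_{n} U_{n}$ in $Y^X$ containing a point $f$, shrink each $U_{n}$ to a basic cylinder $V_{n} \ni f$; the union of the finite supports of the $V_{n}$ is a countable $Z \subseteq X$, and $\omega$-fullness of $G$ supplies $g \in G$ with $g|_Z = f|_Z$, forcing $g \in H \cap G$. For $(2) \Rightarrow (3)$, I take as a base of $G$ the traces $[F,W] \cap G$ where $W$ is a product of open $d$-balls of diameter less than some fixed $\varepsilon$; for a countable centered subfamily the intersection of cylindrical closures in $Y^X$ is a $G_{\delta}$-set whose coordinatewise projections are nonempty intersections of closed balls in $Y$ (nonempty by completeness of $d$ combined with centeredness), and $G_{\delta}$-density then produces a point of $G$ in that intersection. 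The implications $(3) \Rightarrow (4) \Rightarrow (5)$ are standard: player $N$ plays from a base witnessing countable base-compactness, and strong Choquet trivially implies Choquet.

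The main obstacle is $(5) \Rightarrow (1)$. The strategy is to reduce to countable projections. Fix a countable $Z = \{z_n : n \in \mathbb{N}\} \subseteq X$; the projection $\pi_Z : Y^X \to Y^Z$ is an open continuous surjection, and its restriction $\pi_Z|_G : G \to \pi_Z(G)$ is still continuous and open, so $\pi_Z(G)$ inherits the Choquet property (open continuous images of Choquet spaces are Choquet) and is a dense subgroup of $Y^Z$. Since $Z$ is countable and $Y$ is Fr\'{e}chet, $Y^Z$ is itself a completely metrizable topological vector space. The final step is a Pettis-type dichotomy in $Y^Z$: a dense Baire subgroup of a completely metrizable topological group must coincide with the whole group. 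Applying this to $\pi_Z(G)$ (Baire because Choquet) gives $\pi_Z(G) = Y^Z$ for every countable $Z \subseteq X$, which is precisely $\omega$-fullness of $G$.

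The technical hurdle I expect to be most delicate is this final Pettis-type step in $(5) \Rightarrow (1)$: in the classical separable case $Y^Z$ is Polish and the dichotomy follows from the standard Pettis lemma, but for a non-separable Fr\'{e}chet $Y$ one must either reduce by separable approximation or invoke a translation-invariant Baire-category argument valid in arbitrary completely metrizable topological groups. The rest of the argument is essentially routine bookkeeping, amounting mostly to replacing the absolute value on $\mathbb{R}$ by the complete metric $d$ and its induced balls on $Y$.
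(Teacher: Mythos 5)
Your cycle has the same overall shape as the paper's proof, and your $(1)\Rightarrow(2)$ and $(3)\Rightarrow(4)\Rightarrow(5)$ are fine, but two steps contain genuine gaps. In $(2)\Rightarrow(3)$ you claim that the coordinatewise intersections of closed balls are ``nonempty by completeness of $d$ combined with centeredness.'' That is false: in the Banach (hence Fr\'{e}chet) space $c_0$, put $s_n=e_1+\cdots+e_n$ and consider the open balls $B(s_n,\tfrac12+\tfrac1n)$. Every finite subfamily contains the point $\tfrac12 s_{\max F}$, so the family is centered and all diameters are bounded by a fixed $\varepsilon$; yet any $x\in\bigcap_n\overline{B}(s_n,\tfrac12+\tfrac1n)$ would satisfy $|x_i-1|\le\tfrac12$ for every $i$, impossible for $x\in c_0$. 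The corresponding cylinders $[\{x_0\},B(s_n,\tfrac12+\tfrac1n)]$ belong to your proposed base and their traces on a dense $G$ form a countable centered subfamily whose closures already have empty intersection in $Y^X$, so your base is not shown (and is in fact not) countably compact. Some nontrivial external input is needed here; the paper gets $(2)\Rightarrow(3)$ by combining the Aarts--Lutzer facts that complete metrizable spaces are countably base-compact and that this property is productive, the observation that $Y^X$ is a Moscow space, and the Banakh--Gabriyelyan result that a $G_\delta$-dense subspace of a countably base-compact Moscow space is countably base-compact.

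The second gap is in $(5)\Rightarrow(1)$. The ``Pettis-type dichotomy'' you invoke --- a dense Baire subgroup of a completely metrizable group equals the whole group --- is false: a transfinite induction produces a proper dense subgroup of $\mathbb{R}$ meeting every dense $G_\delta$-set inside every interval, and such a subgroup is a Baire space. Pettis's theorem requires the subgroup to have the Baire property \emph{as a subset} of the ambient group, not to be a Baire space, so weakening Choquet to Baire destroys the step. The correct tool, and the one the paper uses, is the Banakh--Hryniv theorem: a topological group is Choquet iff its Raikov completion is Choquet and the group is $G_\delta$-dense in that completion. Applied directly to $G\subseteq Y^X$ (whose Raikov completion is $Y^X$ because $Y$ is complete), it yields $(5)\Rightarrow(2)$ in one line and makes your reduction to countable subproducts unnecessary. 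That reduction has its own problem anyway: the restriction $\pi_Z|_G\colon G\to\pi_Z(G)$ need not be open onto its image, since realizing $\pi_Z(U\cap G)=\pi_Z(U)\cap\pi_Z(G)$ for a cylinder $U$ with support not contained in $Z$ would require elements of $G$ vanishing on $Z$ with prescribed values elsewhere, which mere density of $G$ does not provide; so even the transfer of the Choquet property to $\pi_Z(G)$ is unjustified as written.
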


\begin{proof} $(1)\Leftrightarrow(2)$. By Proposition 2.15 in
\cite{BG}.

$(2)\Rightarrow(3)$.  A topological space is {\it Moscow} if the
closure of every open subset is a union of $G_{\delta}$-sets
(\cite{at}, \S 6.1). By (\cite{at}, p.347) the Tychonoff product
of first-countable space is Moscow and every dense subspace of a
Moscow space is Moscow.

Since $Y$ is a first-countable space, the space $Y^X$ is Moscow.
Since $Y$ is complete metrizable, it is countably base-compact. By
(\cite{AL}, Theorem 3.b), the countably base-compactness is
preserved by Tychonoff products. It follows that $Y^X$ is
countably base-compact.

By (\cite{BG}, Proposition 2.18) every $G_{\delta}$-dense subspace
$X$ of a countably base-compact Moscow space $Y$ is countably
base-compact.

Since $G$ is $G_{\delta}$-dense subspace of a countably
base-compact Moscow space $Y^X$, $G$ is countably base-compact.

$(3)\Rightarrow(4)\Rightarrow(5)$. These implications can be
easily derived from the definitions (see also in \cite{BG}).

$(5)\Rightarrow(2)$. By Theorem 3 of \cite{BH}, a topological
group $X$ is Choquet if and only if the Raikov completion $\varrho
X$ of $X$ is Choquet and $X$ is $G_{\delta}$-dense in $\varrho X$.

Since $Y$ is complete metrizable,  $Y^X=\varrho Y^X$ and $\varrho
G=Y^X$. It follows that $G$ is $G_{\delta}$-dense in $Y^X$.

\end{proof}

\begin{definition}
A family $\mathcal{F}$ of non-empty subsets of a set $X$ will be
called {\it countably compact} if every decreasing sequence
$\{F_n: n\in \mathbb{N}\}\subseteq \mathbb{F}$ has non-empty
intersection (Definition 3.1 in \cite{DAS}).
\end{definition}

\begin{definition}(Definition 3.4 in \cite{DAS}) A topological
space $X$ will be called:

(1) {\it Todd countably compact} if $X$ has a countably compact
pseudobase;

(2) {\it Oxtoby countably compact} if $X$ has a countably compact
pseudobase consisting of open sets;

(3) {\it S\'{a}nchez-Okunev countably compact} if $X$ has a
countably compact pseudobase which consists of zero-sets of $X$;

(4) {\it Telg\'{a}rsky countably compact} if $X$ has a countably
compact base.

\end{definition}

Highlights here include the coincidence of various completeness
and compactness properties (see definitions in \cite{BG,DAS}) in
$B_{\alpha}(X,Y)$ for an arbitrary Fr\'{e}chet space $Y$ and
characterization of these completeness and compactness properties
in terms of internal properties of the space $X$.

Let $\mathcal{P}$ be one of the following properties: { \it
Telg\'{a}rsky complete, Todd complete, Oxtoby complete,
S\'{a}nchez-Okunev complete, strongly Telg\'{a}rsky complete,
strongly Oxtoby complete, strongly Todd complete, strongly
S\'{a}nchez-Okunev complete, Telg\'{a}rsky countably compact,
Oxtoby countably compact, Todd countably compact and
S\'{a}nchez-Okunev countably compact, countably base-compact,
strong Choquet, Choquet}.

\medskip

The Oxtoby Theorem \ref{th40} and the aforementioned results
(\cite{DAS}, see Diagram 2, \cite{DARHTM}) imply the following
implications of properties.

\medskip

\begin{figure}[h]%
\centering
\includegraphics[width=0.9\textwidth]{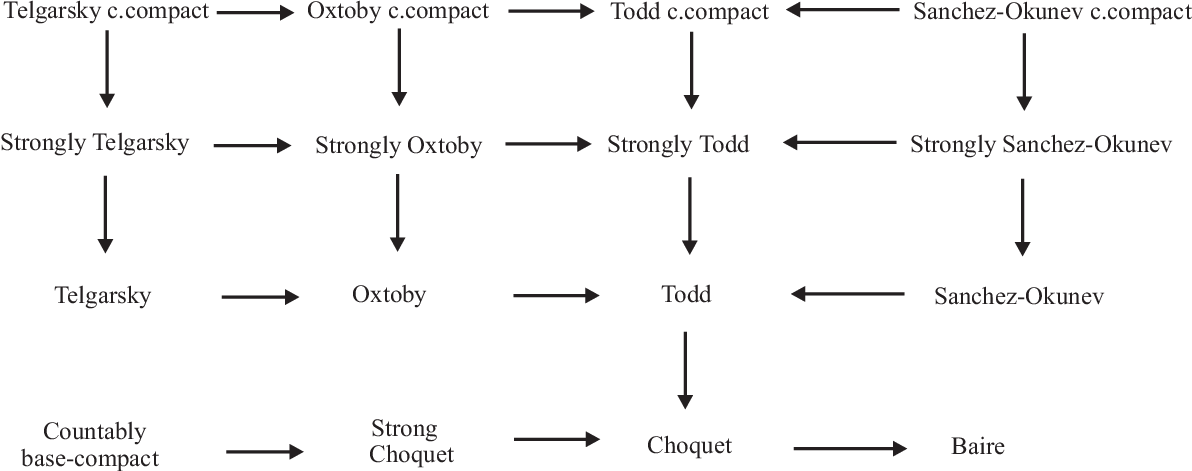}
\end{figure}

\medskip

 By Theorems 4.1 and 4.2 in \cite{DAS}, we have a following
proposition.

\begin{proposition} Any \v{C}ech-complete metric group has property
$\mathcal{P}$. In particular, any Fr\'{e}chet space has property
$\mathcal{P}$.
\end{proposition}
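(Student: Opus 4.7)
The plan is to reduce the statement to the cited Theorems 4.1 and 4.2 of \cite{DAS} together with the implication Diagram~1. First I would observe that the fifteen properties listed in $\mathcal{P}$ are arranged by Diagram~1 into a partial order of implications, so in order to verify that a given space has all of them it suffices to check only the maximal elements of this hierarchy. Inspecting the diagram, the properties that imply all the others are essentially \emph{strongly Telg\'{a}rsky complete} (which implies the other ``strongly'' completeness variants and hence, by weakening the $\pi$-base condition, the corresponding non-strong variants), \emph{Telg\'{a}rsky countably compact} (which dominates the other countable-compactness variants), and \emph{countably base-compact} (which sits above strong Choquet and Choquet).

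Next I would invoke Theorems 4.1 and 4.2 of \cite{DAS}, which assert precisely that any \v{C}ech-complete metric group has these maximal properties. Chasing arrows of Diagram~1 then yields every property on the list, establishing the first half of the proposition. For the ``in particular'' clause I would recall that a Fr\'{e}chet space is by definition a completely metrizable locally convex topological vector space; since for metrizable spaces complete metrizability is equivalent to \v{C}ech-completeness, and since addition turns any topological vector space into an abelian topological group, every Fr\'{e}chet space is a \v{C}ech-complete metric group, so the first half applies.

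The main potential obstacle would be verifying that the cited Theorems 4.1 and 4.2 of \cite{DAS} genuinely do cover the maximal elements of Diagram~1 (rather than leaving some strong completeness variant unaddressed). However, since Diagram~1 has been set up by the author specifically to encode exactly these implications, this amounts only to a bookkeeping check against the statements of the cited theorems, not to any substantive new argument.
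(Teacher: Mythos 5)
Your proposal matches the paper's treatment: the proposition is justified there solely by citing Theorems 4.1 and 4.2 of \cite{DAS} together with the implications recorded in Diagram~1, which is exactly the reduction you describe (check the strongest properties, chase arrows, and note that a Fr\'{e}chet space is a \v{C}ech-complete metric group since complete metrizability coincides with \v{C}ech-completeness for metrizable spaces). No further argument is given in the paper, so your additional bookkeeping remarks are, if anything, more explicit than the original.
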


\newpage

\begin{theorem}\label{th11} Let $X$ be a topological space and let $Y$ be any Fr\'{e}chet
space.  Then for every $0\leq \alpha\leq \omega_1$ the following
assertions are equivalent:

$(1)$ $B_{\alpha}(X,Y)$ is $G_{\delta}$-dense in $Y^X$;

$(2)$ $B_{\alpha}(X,Y)$ is $\omega$-full in $Y^X$;

$(3)$ $B_{\alpha}(X,Y)$ has the property $\mathcal{P}$;

$(4)$ $B_{\alpha}(X,\mathbb{R})$ is $\omega$-full in
$\mathbb{R}^X$;

$(5)$ $B_{\alpha}(X,Y)$ is countably base-compact;

$(6)$ $B_{\alpha}(X,Y)$ is strong Choquet;

$(7)$ $B_{\alpha}(X,Y)$ is Choquet;












$(8)$ $B_{\alpha}(X,Y)$ is $C$-embedded in $Y^X$.




\end{theorem}

\begin{proof} By Theorem \ref{th30}, the implications $(1)$, $(2)$, $(5)$, $(6)$ and $(7)$ are
equivalents.

$(2)\Rightarrow (3)$. By Corollary 10.8
in \cite{DAS}.


$(1)\Leftrightarrow(8)$. By Corollary 6.1.10 in \cite{at}.

$(3)\Rightarrow(2)$. Let $A$ be a countable subspace of $X$.
Consider the projection map $\psi: Y^X\rightarrow Y^A$, i.e.,
$\psi(f)=f\upharpoonright A$ for every $f\in Y^X$. The mapping
$\psi$ is continuous and open. Note that $Y^A$ is a complete
metrizable space.  By Lemma \ref{lem42}, there is $M_1\subseteq
\psi(B_{\alpha}(X,Y))$ such that $M_1$ is a dense $G_{\delta}$-set
in $Y^A$. Then, by Lemma \ref{lem43}, a linear span $\langle M_1
\rangle$ of $M_1$ equal to $Y^A$, but
$\psi(B_{\alpha}(X,Y))\supseteq \langle M_1 \rangle=Y^A$, hence,
$\psi(B_{\alpha}(X,Y))=Y^A$.

$(2)\Rightarrow(4)$. Consider $\mathbb{R}$ as a subspace of $Y$.
Let $A$ be a countable subspace of $X$ and $f: A\rightarrow
\mathbb{R}\subset Y$. Since $B_{\alpha}(X,Y)$ is $\omega$-full in
$Y^X$ there is $\widetilde{f}\in B_{\alpha}(X,Y)$ such that
$\widetilde{f}\upharpoonright A=f$. Let $r: Y\rightarrow
\mathbb{R}$ be a retraction mapping, i.e., $r$ is continuous and
$r(x)=x$ for every $x\in \mathbb{R}$. Then
$F=r\circ\widetilde{f}:X\rightarrow \mathbb{R}$ is an extension
$f$ on $X$.

$(4)\Rightarrow(2)$. Let $A$ be a countable subspace of $X$ and
$f: A\rightarrow Y$. Consider a linear span $L=\langle f(A)
\rangle$ of a countable subspace $f(A)$ of $Y$. Then
$\overline{L}$ is a separable Fr\'{e}chet space. By Toru\'{n}czyk
results in \cite{Tur}, $\overline{L}$ is homeomorphic to
$\mathbb{R}^\kappa$ for some  $\kappa\leq \aleph_0$. Note that if
$B_{\alpha}(X,\mathbb{R})$ is $\omega$-full in $\mathbb{R}^X$ then
$B_{\alpha}(X,\mathbb{R}^\kappa)$ is $\omega$-full in
$\mathbb{R}^{\kappa}$ for any cardinal $\kappa$ (Theorem 2.20 in
\cite{BG}). Thus there is $\widetilde{f}:X\rightarrow
\mathbb{R}^{\kappa}\subset Y$ such that $\widetilde{f}\in
B_{\alpha}(X,Y)$ and $\widetilde{f}\upharpoonright A=f$.


\end{proof}

\begin{definition} Let $X$ and $Y$ be a topological spaces.

(1) A subspace $M$ of $X$ is {\it $C_Y$-embedded} in $X$ if every
continuous function $f:M\rightarrow Y$ has a continuous extension
over $X$.

(2) The space $X$ is {\it $b_Y$-discrete} if every countable subspace of $X$ is discrete
and $C_Y$-embedded in $X$.
\end{definition}

By Theorem \ref{th11} for $\alpha=0$ and results in
\cite{pyt1,tk,DAS}, we get the following corollary.

\begin{corollary} Let $X$ be a space and let $Y$ be any Fr\'{e}chet
space. Then  the following assertions are equivalent:

$(1)$ $C_{p}(X,Y)$ is $G_{\delta}$-dense in $Y^X$;

$(2)$ $C_p(X,\mathbb{R})$ satisfies $P\in \mathcal{P}$;

$(3)$ $C_p(X,Y)$ satisfies $P\in \mathcal{P}$;

$(4)$ $X$ is $b_\mathbb{R}$-discrete;

$(5)$ $X$ is $b_Y$-discrete;

$(6)$ Every countable subset of $X$ is strongly discrete.

\end{corollary}

\begin{definition} Let $X$ and $Y$ be a topological spaces

(1) A subspace $M$ of $X$ is $B^1_Y$-embedded in $X$ if every
Baire-one function $f:M\rightarrow Y$ has a Baire-one extension
over $X$.

(2) The space $X$ is {\it $b^1_Y$-discrete} if every countable
subspace of $X$ is $B^1_Y$-embedded in $X$.

\end{definition}

By results in \cite{Os} and Theorem \ref{th11} for $\alpha=1$, we
have the following theorem.

\begin{corollary} Let $X$ be a space and let $Y$ be any Fr\'{e}chet
space. Then  the following assertions are equivalent:

$(1)$ $B_1(X,Y)$ is $G_{\delta}$-dense in $Y^X$;

$(2)$ $B_1(X,\mathbb{R})$ satisfies $P\in \mathcal{P}$;

$(3)$ $B_1(X,Y)$ satisfies $P\in \mathcal{P}$;

$(4)$ $X$ is $b^1_\mathbb{R}$-discrete;

$(5)$ Every countable subset of $X$ is strongly
$Coz_{\delta}$-disjoint.
\end{corollary}

Recall that a topological space $X$ is called a {\it
$\lambda$-space} if every countable subset is of type $G_{\delta}$
in $X$.

\begin{corollary} Let $X$ be a space of countable pseudocharacter and let $Y$ be any Fr\'{e}chet
space. Then  the following assertions are equivalent:

$(1)$ $B_1(X,Y)$ is $G_{\delta}$-dense in $Y^X$;

$(2)$ $B_1(X,Y)$ satisfies $P\in \mathcal{P}$;

$(3)$ $X$ is $\lambda$-space.
\end{corollary}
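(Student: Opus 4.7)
The plan is to combine the preceding theorem (applied with $\alpha=1$) with a direct analysis of the strong $Coz_{\delta}$-disjointness condition under countable pseudocharacter. By Theorem~\ref{th11} with $\alpha=1$, the equivalence $(1)\Leftrightarrow(2)$ is immediate; and by the theorem immediately preceding this corollary, $(1)$ is equivalent to the internal assertion that every countable subset of $X$ is strongly $Coz_{\delta}$-disjoint. It therefore suffices to show that, under countable pseudocharacter, this condition is equivalent to $(3)$.

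For the easy direction $(3)\Rightarrow(1)$, fix a countable $A\subseteq X$ and set $F_a:=\{a\}$ for every $a\in A$. Countable pseudocharacter combined with Tychonoff separation refines the $G_{\delta}$ representation of each singleton into a $Coz_{\delta}$ one, so each $F_a$ is a $Coz_{\delta}$-neighborhood of $a$. The $F_a$'s are trivially pairwise disjoint, and for every $B\subseteq A$ the union $\bigcup_{b\in B}F_b=B$ is a countable set, which is $G_{\delta}$ by the $\lambda$-space assumption and then $Coz_{\delta}$ by the same Tychonoff refinement. Hence every countable subset of $X$ is strongly $Coz_{\delta}$-disjoint, and $(1)$ follows from the preceding theorem.

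For the converse $(1)\Rightarrow(3)$, fix a countable $A=\{a_n\}_{n\in\mathbb{N}}$ together with the witnessing strongly $Coz_{\delta}$-disjoint family $\{F_n\}$. Using countable pseudocharacter, write $\{a_n\}=\bigcap_k U_{n,k}$ as a decreasing intersection of cozero sets, write $\bigcup_n F_n=\bigcap_k Z_k$ with $Z_k$ cozero, and set $G_k:=Z_k\cap\bigcup_n U_{n,k}$. Each $G_k$ is cozero and contains $A$. For $x\in\bigcap_k G_k$, the relation $\bigcap_k Z_k=\bigcup_n F_n$ places $x$ in a unique $F_j$; writing $x\in U_{n(k),k}$ for some witness $n(k)$ and exploiting the decreasing nesting, any index $n$ taken by $\{n(k)\}$ infinitely often satisfies $x\in\bigcap_k U_{n,k}=\{a_n\}$, and disjointness of the $F_n$'s then forces $n=j$, giving $x=a_j\in A$. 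The residual ``runaway'' scenario, in which $n(k)$ drifts through each value only finitely often, is excluded by folding into the definition of the $U_{n,k}$'s the cozero approximations of the tails $\bigcup_{n'\geq N}F_{n'}$ supplied by the completely $Coz_{\delta}$-additive property, which bounds the witnesses within a fixed range. Hence $A=\bigcap_k G_k\in G_{\delta}$, so $X$ is a $\lambda$-space.

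The main obstacle is precisely ruling out this runaway case; what saves the argument is that the hypothesis supplies cozero approximations of \emph{every} union $\bigcup_{n\in B}F_n$, and not merely of $\bigcup_n F_n$, and this extra strength is what forces the witnesses $n(k)$ to stay within a bounded range and ultimately pins $x$ down to $\{a_j\}$.
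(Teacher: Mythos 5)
Your proposal is correct and follows the route the paper intends: reduce via Theorem~\ref{th11} and the immediately preceding theorem to the condition that every countable subset of $X$ is strongly $Coz_{\delta}$-disjoint, and then prove that under countable pseudocharacter this is equivalent to being a $\lambda$-space --- a step the paper simply delegates to \cite{Os} while you argue it directly (your $(3)\Rightarrow(1)$ direction, taking $F_a=\{a\}$ and using that countable $G_{\delta}$-sets in a Tychonoff space of countable pseudocharacter are $Coz_{\delta}$, is exactly right). The only sketchy point is the ``runaway'' case, but your idea completes cleanly: intersect each $U_{n,k}$ with level-$k$ cozero approximations $W_{N,k}$ of the tails $T_N=\bigcup_{n'\geq N}F_{n'}$ for all $N\leq n$, so that drifting witnesses would force $x\in\bigcap_N T_N$, which is empty by pairwise disjointness of the $F_n$; hence some witness index recurs infinitely often and pins $x$ to a point of $A$.
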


Since $B_{\alpha}(X,\mathbb{R})$ is Choquet for every $\alpha\geq
2$ (Corollary 4.2 in \cite{BG}),  Theorem \ref{th11} immediately
imply the following result.

\begin{corollary} For any topological space $X$, any Fr\'{e}chet space
$Y$ and every ordinal $2\leq \alpha\leq \omega_1$, the function
space $B_{\alpha}(X,Y)$ satisfies $P\in \mathcal{P}$.
\end{corollary}



\medskip

{\bf Author contributions} All the results of the paper were
obtained by the author himself.

\medskip

{\bf Data Availability Statement}

Data sharing is not applicable to this article as no new data were
created or analyzed in this study.

\bibliographystyle{model1a-num-names}
\bibliography{<your-bib-database>}

\begin{thebibliography}{24}

\bibitem{AL}
J. Aarts, D. Lutzer, Completeness properties designed for
recognizing Baire spaces, Dissert. Math. 116 (1974), 1--43.


\bibitem{at}
A.V. Arkhangel'skii, M.G. Tkachenko, Topological group and related
structures, Atlantis Press/World Scientific, Amsterdam-Raris,
2008.

\bibitem{BH}
T. Banakh, O. Hryniv, Some Baire category properties of
topological groups, Visnyk Lviv Univ. Ser. Mech.-Mat.  86 (2018)
71--76.

\bibitem{Eng}
R. Engelking, General Topology, Revised and completed edition,
Heldermann Verlag Berlin (1989).

\bibitem{pyt1}
E.G. Pytkeev, Baire property of spaces of continuous functions.
Mathematical Notes Acad. Sci. USSR,  38 (1985), 908--915.
(Translated from Matematicheskie Zametki, 38:5, 726--740).

\bibitem{Kr}
K. Kuratovski,  Topology I, Academic Press, New York 1966.

\bibitem{LM}
D. Lutzer, R. McCoy, Category in function spaces. I, Pacific J.
Math. 90 (1980), no.1, 145--168.



\bibitem{vD}
J.van Mill ed, Eric K. van Douwen, Collected Papers, vol. 1,
North-Holland, Amsterdam (1994).


\bibitem{BG}
T. Banakh, S. Gabriyelyan, Baire category of some Baire type
function spaces, Topology and its Applications, 272 (2020),
107078.

\bibitem{ospyt}
A.V.~Osipov, E.G.~Pytkeev, Baire property of spaces of
$[0,1]$-valued continuous functions, Rev. Real Acad. Cienc.
Exactas Fis. Nat. Ser. A -- Mat. {\bf 117}, 38 (2023).


\bibitem{ospyt1}
A.V.~Osipov, E.G.~Pytkeev, Baire property of some function spaces,
Acta Math. Hungar. {\bf 168}(2022) 246--259.




\bibitem{sh}
H.H.~Schaefer, Topological Vector Spaces, Springer-Verlag New
York, 1971.

\bibitem{Tur}
H.~Toru\'{n}czyk, Characterizing Hilbert space topology, Fund.
Math. {\bf 111} (1981), 247--262.



\bibitem{tk}
V.V. Tkachuk, Characterization of the Baire property in $C_p(X)$
by the properties of the space $X$, Researh papers, Topology-Maps
and Extensions of Topological Spaces (Ustinov)(1985), 21--27.

\bibitem{Os}
A.V. Osipov, Baire property of the space of Baire-one functions. European Journal of Mathematics 11, 8 (2025).



\bibitem{Tk}
V.V. Tkachuk, A $C_p$-Theory Problem Book. Topological and
Function spaces., Springer, 2011.

\bibitem{Ox}
J.C. Oxtoby, The Banach-Mazur game and Banach Category Theorem,
in: Contributions to the theory of games, Vol. III, Annals.
Studies {\bf 39} (1957), 159--163.




\bibitem{Ox1}
J.C. Oxtoby, Cartezian products of Baire spaces, Fund. Math. {\bf
49}:2 (1961), 157--166.

\bibitem{DAS}
A. Dorantes-Aldama, D. Shakhmatov, Completeness and compactness
properties in metric spaces, topological group and function
spaces, Topology and its Applications, {\bf 226}, 2017, 134--164.



\bibitem{Is}
T. Ishii, The Tychonoff functor and related topics, in: Topics in
General Topology, K. Morita and J. Nagata, eds., North- Holland
(1989), 203--243.

\bibitem{Ok}
S. Oka, The Tychonoff functor and product spaces, Proc. Japan
Acad. 54 (1978), 97--100.


\bibitem{Wh}
H.E. White, Jr., Topological spaces that are $\alpha$-favorable
for a player with perfect information. Proc. Amer. Math. Soc. {\bf
50} (1975), 477--482.

\bibitem{Ke}
A. Kechris, Classical descriptive set theory, 156.
Springer-Verlag, New York, 1995.

\bibitem{DARHTM}
A. Dorantes-Aldama, R. Rojas-Hern\'{a}ndez, \'{A}.
Tamariz-Mascar\'{u}a, Weak pseudocompactness on spaces of
continuous functions, Topology and its Applications, {\bf 196}
(2015), 72--91.

\end{thebibliography}

\end{document}